\newfont{\teneufm}{eufm10}
\newfont{\seveneufm}{eufm7}
\newfont{\fiveeufm}{eufm5}
\def\bbbc{{\mathchoice {\setbox0=\hbox{$\displaystyle\rm C$}\hbox{\hbox
to0pt{\kern0.4\wd0\vrule height0.9\ht0\hss}\box0}}
{\setbox0=\hbox{$\textstyle\rm C$}\hbox{\hbox
to0pt{\kern0.4\wd0\vrule height0.9\ht0\hss}\box0}}
{\setbox0=\hbox{$\scriptstyle\rm C$}\hbox{\hbox
to0pt{\kern0.4\wd0\vrule height0.9\ht0\hss}\box0}}
{\setbox0=\hbox{$\scriptscriptstyle\rm C$}\hbox{\hbox
to0pt{\kern0.4\wd0\vrule height0.9\ht0\hss}\box0}}}}
\def\bbbq{{\mathchoice {\setbox0=\hbox{$\displaystyle\rm
Q$}\hbox{\raise
0.15\ht0\hbox to0pt{\kern0.4\wd0\vrule height0.8\ht0\hss}\box0}}
{\setbox0=\hbox{$\textstyle\rm Q$}\hbox{\raise
0.15\ht0\hbox to0pt{\kern0.4\wd0\vrule height0.8\ht0\hss}\box0}}
{\setbox0=\hbox{$\scriptstyle\rm Q$}\hbox{\raise
0.15\ht0\hbox to0pt{\kern0.4\wd0\vrule height0.7\ht0\hss}\box0}}
{\setbox0=\hbox{$\scriptscriptstyle\rm Q$}\hbox{\raise
0.15\ht0\hbox to0pt{\kern0.4\wd0\vrule height0.7\ht0\hss}\box0}}}}
\def\bbbt{{\mathchoice {\setbox0=\hbox{$\displaystyle\rm
T$}\hbox{\hbox to0pt{\kern0.3\wd0\vrule height0.9\ht0\hss}\box0}}
{\setbox0=\hbox{$\textstyle\rm T$}\hbox{\hbox
to0pt{\kern0.3\wd0\vrule height0.9\ht0\hss}\box0}}
{\setbox0=\hbox{$\scriptstyle\rm T$}\hbox{\hbox
to0pt{\kern0.3\wd0\vrule height0.9\ht0\hss}\box0}}
{\setbox0=\hbox{$\scriptscriptstyle\rm T$}\hbox{\hbox
to0pt{\kern0.3\wd0\vrule height0.9\ht0\hss}\box0}}}}
\def\bbbs{{\mathchoice
{\setbox0=\hbox{$\displaystyle     \rm S$}\hbox{\raise0.5\ht0\hbox
to0pt{\kern0.35\wd0\vrule height0.45\ht0\hss}\hbox
to0pt{\kern0.55\wd0\vrule height0.5\ht0\hss}\box0}}
{\setbox0=\hbox{$\textstyle        \rm S$}\hbox{\raise0.5\ht0\hbox
to0pt{\kern0.35\wd0\vrule height0.45\ht0\hss}\hbox
to0pt{\kern0.55\wd0\vrule height0.5\ht0\hss}\box0}}
{\setbox0=\hbox{$\scriptstyle      \rm S$}\hbox{\raise0.5\ht0\hbox
to0pt{\kern0.35\wd0\vrule height0.45\ht0\hss}\raise0.05\ht0\hbox
to0pt{\kern0.5\wd0\vrule height0.45\ht0\hss}\box0}}
{\setbox0=\hbox{$\scriptscriptstyle\rm S$}\hbox{\raise0.5\ht0\hbox
to0pt{\kern0.4\wd0\vrule height0.45\ht0\hss}\raise0.05\ht0\hbox
to0pt{\kern0.55\wd0\vrule height0.45\ht0\hss}\box0}}}}
\def\bbbz{{\mathchoice {\hbox{$\sf\textstyle Z\kern-0.4em Z$}}
{\hbox{$\sf\textstyle Z\kern-0.4em Z$}}
{\hbox{$\sf\scriptstyle Z\kern-0.3em Z$}}
{\hbox{$\sf\scriptscriptstyle Z\kern-0.2em Z$}}}}
 \newtheorem{thm}{Theorem}
 \newtheorem{cor}[thm]{Corollary}
 \newtheorem{lem}[thm]{Lemma}
 \theoremstyle{definition}
 \theoremstyle{remark}
\def\cX{{\mathcal X}}
\def\({\left(}
\def\){\right)}
\def\[{\left[}
\def\]{\right]}
\def\<{\langle}
\def\>{\rangle}
\def\fB{{\mathfrak B}}
\def\F{\mathbb{F}}
\def\Z{\mathbb{Z}}
\def\R{\mathbb{R}}
\def\ep{{\mathbf{\,e}}_p}
\begin{document}

\title[Solutions to Polynomial 
Congruences in Small Boxes]{On Solutions to Some Polynomial 
Congruences in Small Boxes}

\author{Igor E.~Shparlinski}
\address{Department of Computing, Macquarie University, Sydney, NSW 2109, Australia}

\email{igor.shparlinski@mq.edu.au}

\begin{abstract}  
We use bounds of mixed  character sum 
to study the distribution of solutions to certain polynomial systems of 
congruences modulo a prime $p$. In particular, we obtain nontrivial 
results about the number of solution in boxes with the 
side length below $p^{1/2}$, which seems to be the limit of 
more general  methods
based on the bounds of exponential sums
along varieties.  
\end{abstract}

\subjclass[2010]{11D79, 11K38}

\keywords{Multivariate congruences, distribution of points}

\maketitle

\section{Introduction}

 There is an extensive literature investigating the distribution of solutions
 to the system congruence
\begin{equation}
\label{eq:syst}
F_j(x_1, \ldots, x_n) \equiv 0 \pmod p, \qquad j =1, \ldots, m,
\end{equation}
$F_j(X_1,\ldots,X_n) \in \Z[X_1,\ldots,X_n]$, $j=1, \ldots, m$,
in $m$ variables
with integer coefficients, 
modulo a prime $p$, see~\cite{Fouv,FoKa,Luo,ShpSk,Skor}.

In particular, subject some additional condition
 (related to the so-called $A$-number),  
Fouvry and Katz~\cite[Corollary~1.5]{FoKa} have given 
an asymptotic formula for the number of solutions to~\eqref{eq:syst}
in a box 
$$(x_1, \ldots, x_n) \in [0, h-1]^n
$$ 
for a rather small $h$. In fact the 
limit of the method of~\cite{FoKa} is $h = p^{1/2 + o(1)}$. 

Here we consider a
very special class of systems of $s+1$ polynomial congruences
\begin{equation}
\label{eq:prod}
x_1 \ldots x_n  \equiv a \pmod p, 
\end{equation}
and 
\begin{equation}
\label{eq:diag}
c_{1,j}x_1^{k_{1,j}} + \ldots + c_{n,j} x_n^{k_{m,j}} \equiv b_j \pmod p, 
\qquad j =1, \ldots, s,
\end{equation}
where $a,b_j, c_{i,j}, k_{i,j} \in \Z$,
with $\gcd(a c_{i,j},p) = 1$,  $i=1, \ldots, n$, $j =1, \ldots, s$, and $3 \le k_{i,1} < \ldots < k_{i,s}$.

The interest to the systems of congruences~\eqref{eq:prod} and~\eqref{eq:diag}
stems from the work of Fouvry and Katz~\cite{FoKa}, where a particular case
of the congruence~\eqref{eq:prod} and just one congruence
of the type~\eqref{eq:diag} (that is, for $s=1$) with the same odd exponents 
$k_{1,1} = \ldots = k_{n,1} = k$ and $b_1 = 0$ is given as an example of 
a variety to which one of their main general results applies. 
In particular, in this case and for $k \ge 3$, $b_1=0$ 
(and fixed non-zero coefficients) we see that~\cite[Theorem~1.5]{FoKa}
gives an asymptotic for the number of solutions
with $1 \le x_i \le h$, $i=1, \ldots, n$,  starting from the 
values of $h$   of size about  
$\max\{p^{1/2 + 1/n}, p^{3/4}\} \log p$.
Here we show that a different and more specialised treatment allows
to significantly lower this threshold, which now in some cases reaches 
 $p^{1/4+\kappa}$ for any $\kappa > 0$. 
Furthermore, this applies to the systems~\eqref{eq:prod} and~\eqref{eq:diag}
in full generality and is uniform with respect to the coefficients. 

More precisely, we use a combination of
\begin{itemize}
\item  the bound of mixed character sums to due to Chang~\cite{Chang};
\item  the result of Ayyad,  Cochrane, and  Zheng~\cite{ACZ} on the fourth 
moment o short character sums;
\item the bound of Wooley~\cite{Wool3}  on exponential sums with polynomials.
\end{itemize}

We note that the classical P{\'o}lya-Vinogradov 
and Burgess bounds of multiplicative character sums
(see~\cite[Theorems~12.5 and 12.6]{IwKow}) in a combination with 
a result of Ayyad,  Cochrane, and  Zheng~\cite{ACZ}, 
has been used in~\cite{Shp1,Shp2}
to study the distribution of the single congruence~\eqref{eq:prod} in 
very small boxes), and thus go below the $p^{1/2}$-threshold. 

Here we show that the recent result of Chang~\cite{Chang}
enables us now to study a much more general case of the
simultaneous congruences~\eqref{eq:prod} and~\eqref{eq:diag}.


Throughout the paper, the implied constants in the symbols ``$O$'' and
``$\ll$'' can depend on the degrees $k_{i,j}$ in~\eqref{eq:prod} 
and~\eqref{eq:diag} as well as, occasionally, of 
some other polynomials involved.
We recall that
the expressions $A \ll B$ and $A=O(B)$ are each equivalent to the
statement that $|A|\le cB$ for some constant $c$.

\section{Character and Exponential Sums} 

Let $\cX_p$ be the set of multiplicative characters modulo $p$
and let $\cX_p^* = \cX_p \setminus \{\chi_0\}$ be the 
set of non-principal characters. 
We also denote
$$
\ep(z) = \exp(2 \pi i z/p).
$$
We appeal to~\cite{IwKow} for a background 
on the basic properties of multiplicative characters and
exponential functions, such as orthogonality. 

The following bounds of exponential sums twisted with a
multiplicative character has been given by Chang~\cite{Chang}
for sum in arbitrary finite  fields but only for intervals starting 
at the origin. However, a simple examination of the argument of~\cite{Chang}
reveals that this is not important for the proof:

\begin{lem}
   \label{lem:Chang}
For  any character $\chi \in \cX_p^*$,  a polynomial $F(X) \in \Z[X]$
of degree $k$ 
and any integers $u$ and $h\ge p^{1/4+\kappa}$, we have
$$\sum_{x=u+1}^{u+h} \chi(x) \ep(F(x)) \ll 
h p^{-\eta}, 
$$
where
$$
\eta = \frac{\kappa^2}{4(1+2 \kappa)(k^2+2k + 3)}.
$$
\end{lem}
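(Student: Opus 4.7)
This statement is a direct extension of the main bound of Chang~\cite{Chang}, who establishes exactly the same estimate but for the interval $[1,h]$, i.e.\ the case $u=0$. My plan is therefore to reproduce Chang's argument while tracking its dependence on the starting index of summation, and to verify that none of its ingredients is sensitive to this choice, so that the same exponent $\eta$ emerges.

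Chang's proof rests on a Burgess-style amplification. One introduces amplification parameters $Y,Z$ with $YZ$ small compared with $h$, and, for each fixed $y\in[1,Y]$ and $z\in[1,Z]$ coprime to $p$, writes
\[
\sum_{x=1}^{h}\chi(x)\ep(F(x))=\sum_{x=1}^{h}\chi(x+yz)\ep(F(x+yz))+O(YZ),
\]
the error absorbing the $O(YZ)$ indices displaced outside the interval by the shift. Using the multiplicativity identity $\chi(x+yz)=\chi(z)\chi(x\overline{z}+y)$, where $\overline{z}$ is the inverse of $z$ modulo $p$, and applying H\"{o}lder's inequality reduces the problem to estimating a high even moment in $x$ of a character sum in $y$ twisted by an exponential in a polynomial of bounded degree; this is then controlled by a Weil-type bound whose quality depends only on the degree $k$ of $F$.

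To obtain the lemma I would transplant this argument with the interval $[u+1,u+h]$ in place of $[1,h]$ throughout. Every ingredient is location-independent: the boundary error $O(YZ)$ depends only on the length of the interval, the multiplicative identity for $\chi$ is translation-independent, and the Weil bound for the resulting complete character sum is uniform in all parameters other than the degree. The main (and essentially the only) obstacle is therefore bureaucratic: a careful line-by-line check of Chang's proof to rule out any concealed use of $u=0$, for example via a Fourier completion pinned to a specific residue class, or a change of variables that presupposes positivity of the summation variable. As the author remarks in the text, a simple examination of~\cite{Chang} confirms the absence of any such hidden dependence, so the exponent $\eta=\kappa^{2}/(4(1+2\kappa)(k^{2}+2k+3))$ is inherited without loss.
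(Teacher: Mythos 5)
Your proposal matches the paper's treatment: the lemma is quoted directly from Chang~\cite{Chang}, and the paper's only "proof" is the observation (confirmed by Chang herself, per the acknowledgment) that her argument is insensitive to the starting point of the interval, which is precisely the point you verify via the translation-invariance of the Burgess amplification. No gap; this is essentially the same approach.
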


We note that we do not impose any conditions on the polynomial 
$F$ in Lemma~\ref{lem:Chang}.

On the other hand when $\chi = \chi_0$, we use the following
a very special case of the much more general bound
of Wooley~\cite{Wool3} that applies to polynomials with arbitrary real coefficients.

\begin{lem}
\label{lem:Wooley}
For  any polynomial $F(X) \in \Z[X]$
of degree $k > 2$ with the leading coefficient $a_k \not \equiv 0 \pmod p$, 
and any integers $u$ and $h$ with
$ h < p$, we have
$$\sum_{x=u+1}^{u+h} \ep(F(x)) \ll 
h^{1-1/ 2k(k-2)} + h^{1-1/2(k-2)}p^{1/ 2k(k-2)}.
$$
\end{lem}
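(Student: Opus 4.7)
The plan is to deduce the bound as a direct consequence of Wooley's general Weyl-type estimate from~\cite{Wool3}. After the harmless translation $y = x - u$, which preserves the leading coefficient $a_k$ of $F$, we are reduced to estimating
$$
\sum_{y=1}^{h} \ep(F_u(y)), \qquad \text{where } F_u(Y) = F(Y+u) \in \Z[Y],
$$
a sum of the form $\sum_{y=1}^{h} \exp\(2\pi i G(y)\)$ with $G(Y) = F_u(Y)/p \in \Q[Y]$ of degree $k$ and leading coefficient $\alpha_k = a_k/p$.

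Next I would appeal to Wooley's Weyl-type bound, which for a real-coefficient polynomial $G$ of degree $k \ge 3$ whose leading coefficient $\alpha_k$ admits a rational approximation $|\alpha_k - a/q| \le q^{-2}$ with $\gcd(a,q)=1$ takes the shape
$$
\sum_{y=1}^{h} \exp\(2\pi i G(y)\) \ll h \(q^{-1} + h^{-1} + qh^{-k}\)^{1/(2k(k-2))}.
$$
In our setting, the hypothesis $a_k \not\equiv 0 \pmod p$ ensures that $a_k/p$ is already in lowest terms, so one may simply take $q = p$ and $a = a_k$, trivially verifying the approximation condition with zero error.

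Substituting $q = p$ and using the assumption $h < p$ to absorb $p^{-1}$ into $h^{-1}$ gives
$$
\sum_{x=u+1}^{u+h} \ep(F(x)) \ll h \(h^{-1} + ph^{-k}\)^{1/(2k(k-2))},
$$
after which splitting by the subadditivity of $t \mapsto t^{\sigma}$ produces the two terms in the claim, once one notes that $k/(2k(k-2)) = 1/(2(k-2))$. The main obstacle is not the specialisation step, which is routine, but rather locating the precise form of the single-sum Weyl-type estimate quoted above within~\cite{Wool3}; this is, however, a standard derivation from Wooley's Vinogradov-type mean-value estimates via the usual minor-arcs argument, and the hypothesis $k > 2$ is exactly what is needed for the denominator $2k(k-2)$ to be meaningful.
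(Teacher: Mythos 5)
Your derivation is correct and is essentially the paper's own treatment: the lemma is simply quoted as a special case of Wooley's Weyl-type estimate from~\cite{Wool3}, and your specialisation (translate by $u$, take $q=p$ and $a=a_k$ so the rational approximation to the leading coefficient $a_k/p$ is exact with $\gcd(a_k,p)=1$, absorb $p^{-1}$ into $h^{-1}$ using $h<p$, then split via subadditivity of $t\mapsto t^{1/(2k(k-2))}$ and note $k/(2k(k-2))=1/(2(k-2))$) is precisely the routine computation behind that citation. The only cosmetic caveat is that Wooley's theorem carries an extra factor $h^{\eps}$, which the lemma as stated in the paper also suppresses, so this is not a defect of your argument.
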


Clearly Lemma~\ref{lem:Wooley} is nontrivial only for $h \ge p^{1/k}$ which is 
actually the best possible range.  Furthermore, in a slightly shorter range
we have:

 \begin{cor}
\label{cor:Wooley short}
For  any polynomial $F(X) \in \Z[X]$
of degree $k > 2$ with the leading coefficient $a_k \not \equiv 0 \pmod p$, 
and any integers $u$ and $h$ with
$p^{1/(k-1)}\le  h < p$, we have
$$\sum_{x=u+1}^{u+h} \ep(F(x)) \ll h^{1-1/ 2k(k-2)}.
$$
\end{cor}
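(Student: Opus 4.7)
The plan is to deduce Corollary~\ref{cor:Wooley short} directly from Lemma~\ref{lem:Wooley} by observing that in the restricted range $h \ge p^{1/(k-1)}$, the second summand in Lemma~\ref{lem:Wooley}'s bound is absorbed by the first. Thus the proof should consist of a single comparison of terms, with no further analytic input required.

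Concretely, I would start from the bound
$$
\sum_{x=u+1}^{u+h} \ep(F(x)) \ll h^{1-1/2k(k-2)} + h^{1-1/2(k-2)}p^{1/2k(k-2)}
$$
supplied by Lemma~\ref{lem:Wooley} and then aim to establish
$$
h^{1-1/2(k-2)} p^{1/2k(k-2)} \ll h^{1-1/2k(k-2)}
$$
in the assumed range. Dividing by the common $h$-power and collecting exponents, this should reduce to an inequality of the shape $p^{\alpha} \le h^{\beta}$, which upon raising both sides to an appropriate power ought to become exactly $p \le h^{k-1}$, i.e.\ the hypothesis $h \ge p^{1/(k-1)}$.

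The only point to check carefully is the arithmetic of the exponents, namely that the difference of the $h$-exponents on the two sides equals $(k-1)/2k(k-2)$, so that when balanced against the $p$-exponent $1/2k(k-2)$ one recovers precisely the threshold $h = p^{1/(k-1)}$. Since Lemma~\ref{lem:Wooley} does all the substantive work, there is no genuine obstacle here; the corollary is simply the restriction of that lemma to the range in which its first term dominates, and the conclusion then follows immediately.
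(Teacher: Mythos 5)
Your proposal is correct and is exactly the intended derivation: the paper states the corollary immediately after Lemma~\ref{lem:Wooley} as the range in which the first term dominates, and your exponent arithmetic checks out, since $\frac{1}{2(k-2)}-\frac{1}{2k(k-2)}=\frac{k-1}{2k(k-2)}$, so the comparison reduces precisely to $p\le h^{k-1}$, i.e.\ $h\ge p^{1/(k-1)}$. No gap.
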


We make use of  the following estimate  
of Ayyad, Cochrane and Zheng~\cite[Theorem~1]{ACZ}.

 \begin{lem}
\label{lem:ACZ} Uniformly over  integers $u$ and $h\le p$,  the congruence
$$
 x_1 x_2  \equiv  x_3 x_4   \pmod p, \qquad   u+1 \le x_1,x_2,x_3,x_4 \le u+h,
$$
has   $h^{4}/p + O(h^{2}p^{o(1)})$ solutions as $h\to\infty$.
\end{lem}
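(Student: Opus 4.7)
The plan is to split $N$ into the \emph{diagonal} contribution from solutions with $x_1 x_2 = x_3 x_4$ (equality as integers) and the \emph{off-diagonal} contribution where $x_1 x_2 \equiv x_3 x_4 \pmod p$ but $x_1 x_2 \ne x_3 x_4$, as in the original argument of Ayyad, Cochrane, and Zheng~\cite{ACZ}.

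Writing $I = \{u+1, \ldots, u+h\}$ and $r_I(n) = \#\{(a,b) \in I^2 : ab = n\}$, the diagonal count equals $\sum_n r_I(n)^2$. Using the standard divisor bound $r_I(n) \le d(n) \ll n^{o(1)} \ll p^{o(1)}$ (valid since $n \le (u+h)^2 \le p^{O(1)}$) together with $\sum_n r_I(n) = h^2$, this contribution is $O(h^2 p^{o(1)})$, safely absorbed into the error term.

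The off-diagonal solutions satisfy $x_1 x_2 - x_3 x_4 = mp$ for some nonzero integer $m$ with $|m| \ll h(u+h)/p$, since $|x_1 x_2 - x_3 x_4| \le 2h(u+h)$. Writing the off-diagonal count as $\sum_{m\ne 0}\sum_{(x_3,x_4) \in I^2} r_I(x_3 x_4 + mp)$ and exchanging sums identifies it with $\sum_y T(y)^2 - N_{\mathrm{diag}}$, where $T(y) = \#\{(a, b) \in I^2 : ab \equiv y \pmod p\}$ satisfies $\sum_y T(y) = h^2$. The main term $h^4/p$ emerges from the ``expected'' value $T(y) \approx h^2/p$, so the variance $\sum_y (T(y) - h^2/p)^2$ is precisely what must be controlled.

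The main obstacle is this variance estimate, equivalent on the Fourier side to the bound $\sum_{t \ne 0} |U(t)|^2 \ll h^2 p^{1+o(1)}$, where $U(t) = \sum_{x_1, x_2 \in I} \ep(t x_1 x_2)$. Indeed, by orthogonality of additive characters,
$$N = \frac{1}{p}\sum_{t=0}^{p-1} |U(t)|^2 = \frac{h^4}{p} + \frac{1}{p}\sum_{t\ne 0}|U(t)|^2,$$
which cleanly isolates the main term. The remaining sum is handled via the pointwise geometric-sum bound $|U(t)| \le \sum_{x_1 \in I} \min\!\bigl(h,\,(2\|tx_1/p\|)^{-1}\bigr)$ combined with a careful dyadic count of $x_1 \in I$ with $\|tx_1/p\|$ in each shell $[k/h,(k+1)/h)$, exploiting that $\{tx_1 \bmod p : x_1 \in I\}$ is a genuine arithmetic progression of $h$ distinct residues. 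A naive Cauchy--Schwarz step loses a factor of $h$ and yields only $N\ll h^3$; the sharp error $O(h^2 p^{o(1)})$ requires this pointwise treatment.
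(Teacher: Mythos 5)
The paper offers no proof of this lemma at all: it is imported verbatim as Theorem~1 of Ayyad, Cochrane and Zheng \cite{ACZ}. So your sketch is necessarily a different route, namely an attempt to prove the result from scratch, and it has to be judged on whether it actually closes. The sound parts: the identity $N=\frac1p\sum_{t=0}^{p-1}|U(t)|^2$ with $U(t)=\sum_{x_1,x_2\in I}\ep(tx_1x_2)$ is correct and cleanly isolates the main term $h^4/p$ from $t=0$; the diagonal count $\sum_n r_I(n)^2\ll h^2p^{o(1)}$ via the divisor bound is fine once you first reduce to $0\le u<p$ by periodicity of the count in $u$ (without that reduction the inequality $n\le p^{O(1)}$ is unjustified, since $u$ is an arbitrary integer). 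Note also that once you pass to the Fourier identity, the diagonal/off-diagonal split and the parameter $m$ play no further role, so that part of the sketch is dead weight.

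The genuine gap is the estimate $\sum_{t\ne0}|U(t)|^2\ll h^2p^{1+o(1)}$. This is not a reduction of the problem: it is exactly equivalent to the lemma, and the mechanism you propose for it does not close. Replacing $|U(t)|$ by the pointwise majorant $V(t)=\sum_{x\in I}\min\bigl(h,(2\|tx/p\|)^{-1}\bigr)$ and counting the $x\in I$ in shells $\|tx/p\|\in[k/h,(k+1)/h)$ gives $V(t)\ll h\log h$ only for those $t$ whose progression $\{tx\bmod p:x\in I\}$ puts $O(1)$ points in each window of length $\asymp p/h$; the mere fact that the $h$ residues are distinct is useless here, since the window contains $\asymp p/h\ge h$ residues in the interesting range. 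By the three-distance theorem the $O(1)$-per-shell heuristic fails precisely for the $t$ with a good rational approximation $t/p\approx a/q$, $q\le h$ (already $V(t)\gg h^2$ for all $0<|t|\le p/(2h^2)$ when $u=0$), and these exceptional $t$ carry a positive proportion of $\sum_{t\ne 0}|U(t)|^2$, which is genuinely of order $h^2p$. To rescue the argument one must average over $t$: after substituting $s=tx$ one needs $\sum_{s\ne 0}\min(h,\|s/p\|^{-1})\min(h,\|syx^{-1}/p\|^{-1})\ll p^{1+o(1)}$ for all but an acceptable set of pairs $(x,y)\in I^2$, and controlling the exceptional pairs amounts to counting solutions of $ay\equiv bx\pmod p$ in $I^2$ for small $a,b$ --- a lattice-point/Farey-fraction argument that is the actual content of the theorem and is entirely absent from your sketch. (A naive treatment here gives $\sum_{t\ne0}V(t)^2\ll ph^3$, i.e.\ exactly the loss you flag but do not repair.) Either carry out that argument in full or do what the paper does and cite \cite[Theorem~1]{ACZ}.
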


We note that Lemma~\ref{lem:ACZ}  is a essentially a statement 
about the fourth monent of short character sums, see~\cite[Equation~(4)]{ACZ}.
In fact, the next result makes it clearer:

 \begin{cor}
\label{cor:ACZ-weights} Let $\rho(x)$ be an arbitrary complex valued
function with 
$$
|\rho(x)| \le 1, \qquad  x\in \R.
$$ 
Uniformly over  integers $u$ and $h\le p$,  we have
$$
\sum_{\chi\in \cX_p} \left|
\sum_{x=u+1}^{u+h} \rho(x) \chi(x)\right|^4 \le h^{4} + O\(h^{2}p^{1+o(1)}\), 
$$
as $h\to\infty$.
\end{cor}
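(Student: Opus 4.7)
The plan is to reduce Corollary~\ref{cor:ACZ-weights} directly to Lemma~\ref{lem:ACZ} by expanding the fourth power and invoking the orthogonality of the multiplicative characters modulo $p$. After expansion,
$$
\sum_{\chi\in \cX_p} \left|\sum_{x=u+1}^{u+h} \rho(x) \chi(x)\right|^4
= \sum_{x_1,x_2,x_3,x_4=u+1}^{u+h} \rho(x_1)\rho(x_2)\overline{\rho(x_3)\rho(x_4)} \sum_{\chi\in \cX_p} \chi(x_1x_2)\overline{\chi(x_3x_4)},
$$
and the inner character sum equals $p-1$ when $x_1x_2\equiv x_3x_4\pmod p$ with all $x_i$ coprime to $p$, and vanishes otherwise. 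Vanishing automatically handles indices with $p\mid x_i$ (of which there is at most one in $[u+1,u+h]$ since $h\le p$).

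Next, I would apply the bound $|\rho(x)|\le 1$ together with the triangle inequality to get
$$
\sum_{\chi\in \cX_p} \left|\sum_{x=u+1}^{u+h} \rho(x) \chi(x)\right|^4
\le (p-1)\, N,
$$
where $N$ counts the quadruples $(x_1,x_2,x_3,x_4)\in [u+1,u+h]^4$ satisfying the congruence $x_1x_2\equiv x_3x_4\pmod p$. Lemma~\ref{lem:ACZ} then gives $N = h^{4}/p + O(h^{2}p^{o(1)})$, so that
$$
(p-1)N \le h^{4} + O\bigl(h^{2}p^{1+o(1)}\bigr),
$$
which is exactly the desired bound.

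The argument is essentially a one-line reduction, so I do not anticipate a real obstacle; the only slight subtlety is the bookkeeping around characters vanishing at integers divisible by $p$, which is consistent with the $x_i$'s dropping out of the sum on the left-hand side, and the observation that any term $h^4/p$ on the right is dominated by the error term since $h\le p$. Crucially, the argument does not use the positivity of $\rho$ or any structure beyond $|\rho|\le 1$, so the corollary is uniform in $\rho$.
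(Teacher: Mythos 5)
Your proof is correct and follows essentially the same route as the paper: expand the fourth power, interchange summation, apply orthogonality of characters to reduce to counting solutions of $x_1x_2\equiv x_3x_4\pmod p$ in the box, and conclude with Lemma~\ref{lem:ACZ}. The extra care you take with terms where $p\mid x_i$ and with the final multiplication by $p-1$ is consistent with (and slightly more explicit than) the paper's argument.
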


\begin{proof}
Expanding the fourth power, and changing the order of summation, we obtain 
\begin{equation*}
\begin{split}
\sum_{\chi\in \cX_p} &\left|
\sum_{x=u_i+1}^{u+h} \rho(x) \chi(x)\right|^4 \\
&= \sum_{\chi\in \cX_p} \sum_{x_1, \ldots, x_4=u+1}^{u+h} 
\rho(x_1) \rho(x_2) \rho(x_3) \rho(x_4) \chi(x_1x_2 x_3^{-1} x_4^{-1})\\
&= \sum_{x_1, \ldots, x_4=u+1}^{u+h} 
\rho(x_1) \rho(x_2) \rho(x_3) \rho(x_4) 
\sum_{\chi\in \cX_p} \chi(x_1x_2 x_3^{-1} x_4^{-1}).
 \end{split}
\end{equation*}
Using the orthogonality of characters, we write
\begin{equation*}
\begin{split}
\sum_{\chi\in \cX_p} &\left|
\sum_{x=u+1}^{u+h} \rho(x) \chi(x)\right|^4 \\
&= \sum_{\chi\in \cX_p} \sum_{x_1, \ldots, x_4=u+1}^{u+h} 
\rho(x_1) \rho(x_2) \rho(x_3) \rho(x_4) \chi(x_1x_2 x_3^{-1} x_4^{-1})\\
&= (p-1) \sum_{\substack{x_1, \ldots, x_4=u+1\\ x_1x_2 \equiv x_3 x_4 \pmod p}}^{u+h} 
\rho(x_1) \rho(x_2) \rho(x_3) \rho(x_4) \\
& \le (p-1)  \sum_{\substack{x_1, \ldots, x_4=u+1\\ x_1x_2 \equiv x_3 x_4 \pmod p}}^{u+h} 1.
 \end{split}
\end{equation*}
Using Lemma~\ref{lem:ACZ} we derive the desired bound. 
\end{proof}

\section{Main Result}

We are now able to present our main result.
Let $\fB$ be a cube of the form
$$
\fB = [u_1+1,u_1+h]\times \ldots \times [u_n+1,u_n+h]
$$
with some integers $h,u_i$ with 
$1 \le u_i +1 < u_i+h < p$, $i=1, \ldots, n$. 
We denote by $N(\fB)$ the number of integer vectors
$$
(x_1, \ldots, x_n) \in \fB
$$
satisfying~\eqref{eq:prod} and~\eqref{eq:diag} simultaneously. 

As we have mentioned the case of just one congruence~\eqref{eq:prod}
has been considered in~\cite{Shp1,Shp2}, so we always assume that $s \ge 1$
(and thus $n \ge 3$). 

Let
\begin{equation*}
\begin{split}
k & = \min\{k_{i,j}~:~i =1, \ldots, n, \ j =1, \ldots, s\},\\
K & = \max\{k_{i,j}~:~i =1, \ldots, n, \ j =1, \ldots, s\}.
\end{split}
\end{equation*}

\begin{thm}
\label{thm:N Asymp} For any fixed $\kappa > 0$ and 
$$
p > h \ge \min\{ p^{1/4+\kappa}, p^{1/(k-1)}\}
$$
we have  
$$ 
N_p(\fB) =  \frac{h^n}{p^{s+1}} + 
O\( h^{n} p^{-1-\eta(n-4)} +  h^{n-2} p^{-\eta(n-4)}\), 
$$
where
$$
\eta = \frac{\kappa^2}{4(1+2 \kappa)(K^2+2K + 3)}.
$$
\end{thm}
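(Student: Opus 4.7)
The strategy is to detect the product congruence~\eqref{eq:prod} by orthogonality of the multiplicative characters modulo $p$ and the $s$ diagonal congruences~\eqref{eq:diag} by orthogonality of additive characters, isolate the main term, and estimate the remainder by combining the uniform pointwise bounds of Lemma~\ref{lem:Chang} and Corollary~\ref{cor:Wooley short} with the fourth-moment estimate of Corollary~\ref{cor:ACZ-weights}. Concretely, I would write
$$
N_p(\fB)=\frac{1}{(p-1)p^s}\sum_{\chi\in\cX_p}\sum_{t_1,\ldots,t_s=0}^{p-1}\chi(a^{-1})\,\ep\!\left(-\sum_{j=1}^s t_j b_j\right)\prod_{i=1}^n T_i(\chi,\vec{t}),
$$
with $\vec{t}=(t_1,\ldots,t_s)$ and
$$
T_i(\chi,\vec{t})=\sum_{x=u_i+1}^{u_i+h}\chi(x)\,\ep\!\left(\sum_{j=1}^s t_j c_{i,j}x^{k_{i,j}}\right).
$$
The diagonal contribution from $(\chi_0,\vec{0})$ yields the main term $h^n/((p-1)p^s)=h^n/p^{s+1}+O(h^n/p^{s+2})$; every other pair $(\chi,\vec{t})$ feeds into the error.

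The main input I would establish is a uniform pointwise saving
$$
|T_i(\chi,\vec{t})|\ll hp^{-\eta}\qquad\text{for every }(\chi,\vec{t})\ne(\chi_0,\vec{0}).
$$
For $\chi\ne\chi_0$ this is precisely Lemma~\ref{lem:Chang} applied to the polynomial of degree at most $K$ sitting inside the additive character (its exact degree and leading coefficient being irrelevant there). For $\chi=\chi_0$ and $\vec{t}\ne\vec{0}$, the polynomial $F_i(X)=\sum_j t_j c_{i,j}X^{k_{i,j}}$ has degree $k_{i,j^\star}\in[k,K]$, where $j^\star$ is the largest index with $t_{j^\star}\not\equiv 0\pmod p$, and its leading coefficient $t_{j^\star}c_{i,j^\star}$ is nonzero modulo $p$ thanks to $\gcd(c_{i,j},p)=1$; Corollary~\ref{cor:Wooley short} then applies, and a direct comparison of exponents under the hypothesis $h\ge\min\{p^{1/4+\kappa},p^{1/(k-1)}\}$ shows that its output is in turn dominated by $hp^{-\eta}$.

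With this uniform saving in hand I would apply it to $n-4$ of the factors and treat the remaining four by fourth moments: for each fixed $\vec{t}$, H\"older together with Corollary~\ref{cor:ACZ-weights} (taking $\rho(x)=\ep(F_i(x))$, so $|\rho|=1$) gives
$$
\sum_{\chi\in\cX_p}\prod_{i=1}^4|T_i(\chi,\vec{t})|\le\prod_{i=1}^4\Bigl(\sum_{\chi\in\cX_p}|T_i(\chi,\vec{t})|^4\Bigr)^{1/4}\ll h^4+h^2 p^{1+o(1)},
$$
and a trivial sum over the $p^s$ values of $\vec{t}$ supplies an extra factor $p^s$. Multiplying the two estimates, dividing by $(p-1)p^s$, and absorbing the $p^{o(1)}$ at the cost of an infinitesimal decrease in $\eta$, produces the stated error $O(h^n p^{-1-\eta(n-4)}+h^{n-2}p^{-\eta(n-4)})$. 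The main obstacle is the uniform pointwise bound itself: reconciling the rather different exponents produced by Chang and by Wooley into a single saving $hp^{-\eta}$ is exactly what dictates the lower bound on $h$ in the hypothesis.
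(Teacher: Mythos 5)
Your proposal is correct in substance and follows the paper's skeleton (double orthogonality, main term from $(\chi_0,\vec 0)$, Chang's bound on $n-4$ factors, H\"older plus the weighted Ayyad--Cochrane--Zheng fourth moment on the remaining four, trivial summation over the additive frequencies), but it deviates from the paper in how the principal-character terms are handled, and the deviation is legitimate. The paper splits the error into $R_1$ (all $\chi\in\cX_p^*$, all $\lambda$) and $R_2$ ($\chi=\chi_0$, $\vec\lambda\ne\vec 0$), and treats $R_2$ separately: Corollary~\ref{cor:Wooley short} on $n-2$ factors, then an $L^2$/Parseval bound in the $\lambda_j$-variables on the last two factors giving $\ll p^s h$, and finally the observation $\eta<1/(2K(K-2))$ to show this third error term is dominated. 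You instead upgrade the Wooley bound to the same uniform saving $|T_i(\chi_0,\vec t)|\ll hp^{-\eta}$ and fold all non-main pairs into a single fourth-moment pass over all of $\cX_p$ (which is fine, since Corollary~\ref{cor:ACZ-weights} includes $\chi_0$). This is a genuine streamlining: it avoids the separate $R_2$ computation and the final comparison of terms. What it costs is twofold. First, the step you label ``a direct comparison of exponents'' is the only new thing you must actually prove, namely $h^{1-1/(2k'(k'-2))}\le hp^{-\eta}$ for every occurring degree $k'\in[k,K]$; this does hold because $\eta\le 1/(8K(K-2))$ for the admissible $\kappa$ and $h\ge p^{1/4+\kappa}$ (or $h\ge p^{1/(k-1)}$), but you should write the inequality out — note that both your argument and the paper's in fact need \emph{both} range conditions (Chang needs $h\ge p^{1/4+\kappa}$, Wooley needs $h\ge p^{1/(k-1)}$), so the $\min$ in the statement should really be read as a $\max$, consistently with the Comments section. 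Second, by converting the Wooley saving (a power of $h$) into the much weaker $p^{-\eta}$ at the pointwise stage, you discard exactly the flexibility the author exploits in the Comments to widen the range for $k=3,4$ via the full Lemma~\ref{lem:Wooley}; the paper's two-block structure keeps that option open. Finally, like the paper, you silently absorb the $p^{o(1)}$ from the fourth moment and the $h^n/((p-1)p^s)$ versus $h^n/p^{s+1}$ discrepancy into the error term; your remark about an infinitesimal decrease of $\eta$ handles the former for $n>4$, which is the only interesting case.
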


\begin{proof} Using the orthogonality of characters, 
we write
\begin{equation*}
\begin{split}
N_p(\fB) = \sum_{(x_1, \ldots, x_n) \in \fB}
 \frac{1}{p^s}
\sum_{\lambda_1, \ldots, \lambda_s =0}^{p-1} &
\ep\(  \sum_{j=1}^s  \lambda_j \(\sum_{i=1}^n c_{i,j}x_i^{k_{i,j}}- b_j\)\) \\
& \qquad \qquad  \quad \frac{1}{p-1}  \sum_{\chi\in \cX_p} \chi(x_1 \ldots x_na^{-1}) .
\end{split}
\end{equation*}
Hence, changing the order of summation, we obtain 
\begin{equation*}
\begin{split}
N_p(\fB) =
 \frac{1}{(p-1) p^s} & \sum_{\lambda_1, \ldots, \lambda_s =0}^{p-1} 
  \ep\( - \sum_{j=1}^s  \lambda_j  b_j \)\\
 &\sum_{\chi\in \cX_p} \chi(a^{-1}) 
\prod_{i=1}^n S_i(\chi; \lambda_1, \ldots, \lambda_s),
\end{split}
\end{equation*}
where
$$
S_i(\chi; \lambda_1, \ldots, \lambda_s) = \sum_{x=u_i+1}^{u_i+h} 
\ep\(\sum_{j=1}^s \lambda_j c_{i,j} x^{k_{i,j}}\), \quad i =1, \ldots, n.
$$
Separating the term $h^n/(p-1) p^s$, 
corresponding to $\chi=\chi_0$ and $\lambda_1 =\ldots= \lambda_s =0$,
we derive
\begin{equation}
\label{eq:R1R2}
N_p(\fB) - \frac{h^n}{(p-1) p^s} \ll  \frac{1}{p^{s+1}}\(R_1  + R_2\), 
\end{equation}
where
\begin{equation*}
\begin{split}
R_1 &=   \sum_{\lambda_1, \ldots, \lambda_s =0}^{p-1} \sum_{\chi\in \cX_p^*}
 \prod_{i=1}^n |S_i(\chi; \lambda_1, \ldots, \lambda_s)|,\\
 R_2 &=   \sum_{\substack{\lambda_1, \ldots, \lambda_s =0\\
 (\lambda_1, \ldots, \lambda_s) \ne (0, \ldots, 0)}}^{p-1} 
 \prod_{i=1}^n |S_i(\chi_0; \lambda_1, \ldots, \lambda_s)|.
\end{split}
\end{equation*}

To estimate $R_1$ we use Lemma~\ref{lem:Chang} and write
$$
R_1 \le  h^{n-4} p^{-\eta(n-4)}  \sum_{\lambda_1, \ldots, \lambda_s =0}^{p-1} \sum_{\chi\in \cX_p^*}
 \prod_{i=1}^4 |S_i(\chi; \lambda_1, \ldots, \lambda_s)|.
$$
Using the H{\"o}lder inequality and Corollary~\ref{cor:ACZ-weights}, we obtain 
\begin{equation*}
\begin{split}
 \sum_{\chi\in \cX_p^*}
 \prod_{i=1}^4 |S_i(\chi; \lambda_1, \ldots, \lambda_s)| 
 \le  \(\prod_{i=1}^4  \sum_{\chi\in \cX_p^*}  
 |S_i(\chi; \lambda_1, \ldots, \lambda_s)|^4\)^{1/4}&\\
 \ll  h^{4}  + h^{2}&p^{1+o(1)} .
\end{split}
\end{equation*}
Therefore, 
\begin{equation}
\label{eq:R1 bound}
R_1 \ll  h^{n} p^{s-\eta(n-4)} + h^{n-2} p^{s+1-\eta(n-4)}.  
\end{equation}

Furthermore, for $R_2$ we use Corollary~\ref{cor:Wooley short} to derive
$$
R_2 \le  h^{(n-2)(1-1/2K(K-2))} \sum_{\substack{\lambda_1, \ldots, \lambda_s =0\\
 (\lambda_1, \ldots, \lambda_s) =(0, \ldots, 0)}}^{p-1} 
 \prod_{i=1}^2 |S_i(\chi; \lambda_1, \ldots, \lambda_s)|.
$$
Using the H{\"o}lder inequality and the orthogonality of exponential functions 
(similarly to the proof of Corollary~\ref{cor:ACZ-weights}), we obtain 
\begin{equation*}
\begin{split}
 \sum_{\substack{\lambda_1, \ldots, \lambda_s =0\\
 (\lambda_1, \ldots, \lambda_s) =(0, \ldots, 0)}}^{p-1} &
 \prod_{i=1}^2 |S_i(\chi; \lambda_1, \ldots, \lambda_s)|\\
 &\le  \(\prod_{i=1}^2   \sum_{\lambda_1, \ldots, \lambda_s =0}^{p-1} 
 |S_i(\chi; \lambda_1, \ldots, \lambda_s)|^2\)^{1/2} \ll  p^sh.
\end{split}
\end{equation*}
Thus 
\begin{equation}
\label{eq:R2 bound}
R_2 \ll    h^{n -1 - (n-2)/ 2K(K-2)} p^{s}.
\end{equation}
Substituting the bounds~\eqref{eq:R1 bound} and~\eqref{eq:R2 bound} in~\eqref{eq:R1R2}
we obtain
\begin{equation*}
\begin{split}
N_p(\fB) - & \frac{h^n}{p^{s+1}} \\
&\ll 
h^{n}  p^{-1-\eta(n-4)}  +  h^{n-2} p^{-\eta(n-4)}+ h^{n -1 - (n-2)/ 2K(K-2)} p^{-1} .\end{split}
\end{equation*}
Clearly, 
$$
\eta <  \frac{1}{2K(K-2)}.
$$
Thus we see that the second term always dominates  
the third term and the result follows. 
\end{proof}

\section{Comments}

Clearly, for any $\kappa > 0$, $k\ge 5$ and $p > h \ge p^{1/4+\kappa}$,
Theorem~\ref{thm:N Asymp} implies that 
$$
N_p(\fB)= (1 + o(1))  \frac{h^n}{p^{s+1}},
$$
as $p\to \infty$, provided that 
$$
n \ge (s+1/2) \eta^{-1}+4.
$$
For $k=3$ and $4$ the range of Theorem~\ref{thm:N Asymp} 
becomes $h \ge p^{1/2}$ and $h \ge p^{1/3}$. However it is easy to 
see that using the full power of Lemma~\ref{lem:Wooley} instead of 
Corollary~\ref{cor:Wooley short} one can derive nontrivial results in 
a wider range.
Namely, for any $\kappa > 0$ there exists some 
$\gamma > 0$ (independent on $n$ and other parameters in~\eqref{eq:prod} and~\eqref{eq:diag})  
such that, for $h \ge p^{1/3+\kappa}$ if $k = 3$ and 
for $h \ge p^{1/4+\kappa}$ if $k = 4$, we have
$$
N_p(\fB)=  \frac{h^n}{(p-1)p^{s}} + O\(h^{(1 - \gamma)n}\).
$$
We also recall that for polynomials of small degrees
 stronger values of Lemma~\ref{lem:Wooley}
are available, see~\cite{BoWo} and references therein.

Note that the same method can be applied (with essentially the same results) 
to the systems of congruences where instead of~\eqref{eq:prod} 
we have a more general congruence 
$$
x_1^{m_1} \ldots x_n^{m_n}  \equiv a \pmod p
$$
for some integers $m_i$ with $\gcd(m_i,p-1)=1$, $i=1, \ldots, n$.

Moreover, we recall that the  Weil bound~\cite[Appendix~5, Example~12]{Weil} (see
also~\cite[Chapter~6, Theorem~3]{Li})  and the standard reduction between 
complete and incomplete sums (see~\cite[Section~12.2]{IwKow})
implies that 
$$\sum_{x=u+1}^{u+h} \chi(G(x)) \ep(F(x)) \ll p^{1/2} \log p, 
$$
where $G(x)$ is a polynomial that is not a perfect power of any other polynomial
in the algebraic closure $\overline \F_p$ of the finite field of $p$ elements,
Thus for $h \ge p^{1/2+\kappa}$, using this bound instead of Lemma~\ref{lem:Chang}
allows us to replace~\eqref{eq:prod} with the congruence 
$$
G_1(x_1) \ldots G_n(x_n) \equiv a \pmod p
$$
for arbitrary polynomials $G_1(X), \ldots, G_n(X)\in \Z[X]$ such that their 
reductions modulo $p$ are not perfect powers in $\overline \F_p$.
In fact, even for $G_1(X) = \ldots= G_n(X)= X$ (that is, for the 
congruence~\eqref{eq:prod}) this leads to a result, which is sometimes stronger 
that those of~\cite{FoKa} and Theorem~\ref{thm:N Asymp}.

\section{Acknowledgment}

The author is very grateful to Mei-Chu Chang for the confirmation 
that the main result of~\cite{Chang}  applies to intervals in 
an arbitrary position.  

This work was supported in part by the  ARC Grant DP1092835.

\end{document}